\newtheorem{theorem}{Theorem}[section]
\newtheorem{corollary}[theorem]{Corollary}
\newtheorem{proposition}[theorem]{Proposition}
\numberwithin{equation}{section}
\newcommand{\norm}[1]{\left\|#1\right\|}
\newcommand{\T}{\ensuremath{\mathbb{T}}}
\newcommand*{\R}{\ensuremath{\mathbb{R}}}
\newcommand*{\Z}{\ensuremath{\mathbb{Z}}}
\def\dist{\mathop{\rm dist}\nolimits}    
\def\div{\mathop{\rm div}\nolimits}    
\def\dim{\mathop{\rm dim}\nolimits}
\DeclareMathOperator*{\esssup}{ess\,sup}
\title{A fractal version of the Onsager's conjecture: the $\beta-$model}
\author{Luigi De Rosa}
\address{Department Mathematik Und Informatik, Universitat Basel, Spiegelgasse 1, CH-4051 Basel, Switzerland}
\email{luigi.derosa@unibas.ch}
\author{Silja Haffter}
\address{EPFL SB, Station 8, 
CH-1015 Lausanne, Switzerland}
\email{silja.haffter@epfl.ch}
\date{\today}
\begin{document}

\begin{abstract}
Intermittency phenomena are known to be among the main reasons why Kolmogorov's theory of fully developed Turbulence is not in accordance with several experimental results. This is why some \emph{fractal} statistical models have been proposed in order to realign the theoretical physical predictions with the empirical experiments. They indicate that energy dissipation, and thus singularities, are not space filling for high Reynolds numbers. 
This note aims to give a precise mathematical statement on the energy conservation of such fractal models of Turbulence. We prove that 
for $\theta-$H\"older continuous weak solutions of the incompressible Euler equations energy conservation holds if the upper Minkowski dimension of the spatial singular set  $S \subseteq \T^3$ (possibly also time-dependent) is small, or more precisely if $\overline{\dim}_{\mathcal{M}}(S)<2+3\theta\,.$ In particular, the spatial singularities of \emph{non-conservative} $\theta-$H\"older continuous weak solutions of Euler are concentrated on a set with dimension lower bound $2+3\theta$. This result can be viewed as the fractal counterpart of the celebrated Onsager conjecture and it matches both with the prediction given by the $\beta-$model introduced by Frisch,  Sulem and Nelkin in \cite{FSN78} and with  other mathematical results in the endpoint cases. 
\end{abstract}

\maketitle

\par
\medskip\noindent
\textbf{Keywords:} incompressible Euler equations,  energy conservation,  singular set,  fractal turbulence models.
\par
\medskip\noindent
{\textbf{MSC (2020):} 	35Q31 - 35D30  - 76F05 - 28A80.
\par
}

\section{Introduction}

We consider the incompressible Euler equations 
\begin{equation}\label{E}
\left\{\begin{array}{l}
\partial_t v+\div (v\otimes v)  +\nabla p =0\\
\div v = 0,
\end{array}\right.
\end{equation}
in the spatial periodic setting $\T^3=\R^3\setminus \Z^3$,
where $v:\T^3\times [0,T]\rightarrow \R^3$ is a vector field representing the velocity of the fluid and $p:\T^3\times [0,T]\rightarrow \R$ is the hydrodynamic pressure. We study weak solutions of the system \eqref{E}, namely vector fields $v\in L^2\left(\T^3\times [0,T];\R^3\right)$ such that 
$$
\int_0^T\int_{\T^3}\left( v\cdot \partial_t \varphi+ v\otimes v : \nabla \varphi \right)\,dx\, dt=0
$$
for all test functions $\varphi\in C^\infty_c\left(\T^3\times (0,T);\R^3\right)$ with $\div \varphi=0$.  The pressure does not appear in the weak formulation due to the constraint $\div \varphi=0$, but it can be recovered a posteriori from the weak solution $v$ as the unique $0-$average solution of the elliptic equation
\begin{equation}\label{Lapl_p}
-\Delta p=\div \div (v\otimes v)\,,
\end{equation}
which is derived, formally, by computing the divergence of the first equation in \eqref{E}.

The Euler system models the motion of an incompressible non-viscous fluid and it can be seen as the vanishing viscosity limit of the Navier-Stokes equations, where the viscosity modelled with the additional term $\nu \Delta v$ for $\nu>0$ on the right-hand side of the first equation in \eqref{E}. This term is responsible for the kinetic energy dissipation
\begin{equation}\label{Energy_NS}
\frac{d}{dt} e_{v^\nu}(t)=-\nu \int_{\T^3} |\nabla v^\nu(x,t)|^2\,dx
\end{equation}
of solutions $v^\nu$ of the Navier-Stokes equations, where the kinetic energy is defined as
$$
e_v(t):=\frac{1}{2}\int_{\T^3} |v(x,t)|^2\,dx.
$$
The energy identity \eqref{Energy_NS} holds for sufficiently smooth solutions $v^\nu$ of the Navier-Stokes equations and,  in the \emph{lucky} case in which their regularity (say $C^1(\T^3 \times [0,T])$) is preserved in the limit $\nu\rightarrow 0$, this would imply that the vanishing viscosity limit $v$ is a conservative solution of Euler, i.e. 
\begin{equation}\label{en_cons}
\frac{d}{dt}e_v\equiv 0 \,.
\end{equation}
Kolmogorov's celebrated Theory of Turbulence (K-41) \cite{K41} from 1941
instead postulates that the mean energy dissipation rate $\nu \int |\nabla v^\nu |\,dx$ remains, in a suitable statistical sense, uniformly positive as $\nu \rightarrow 0$,  thus predicting that in general the smoothness of solutions $v^\nu$ to Navier-Stokes deteriorates in the vanishing viscosity limit.  This prediction culminated in Onsager's conjecture \cite{Ons} from 1949, which asserted that the sharp  regularity threshold to deduce energy conservation \eqref{en_cons} for solutions $v$ of \eqref{E} is $v \in L^\infty\left((0,T);C^{\sfrac{1}{3}}(\T^3)\right)$.

The full proof of energy conservation for weak solutions $v$ of \eqref{E} was given by Constantin, E and Titi \cite{CET94} for $v \in L^3\left((0,T);B^\theta_{3,\infty}(\T^3)\right)$ for some $\theta>\sfrac13$ (see also \cite{Ey94}).  We refer also to \cite{CCFS08} for a slightly sharper result and to \cite{CD18,Is2013} for a different proof relying on the kinetic energy regularity $e_v\in C^{\sfrac{2\theta}{(1-\theta)}}([0,T])$. While the energy conservation relies on somehow standard analytical tools, the construction of non-conservative solutions to \eqref{E} turned out to be challenging and technical.  The first $L^2$ weak solutions to \eqref{E} violating \eqref{en_cons} were built in \cite{Sch93} and \cite{Sh00} respectively by Scheffer and Shnirelman. The turning point in resolving the flexibility part of Onsager's conjecture was reached with the work \cite{DS2013} of De Lellis and Székelyhidi in which they proved the existence of $C^0$ dissipative Euler flows by introducing so-called \emph{convex integration} techniques in the context of fluid dynamics. After a series of fundamental advancements \cite{BDLIS15}, \cite{DS17} and  \cite{BDS16}, Isett finally proved the existence of non-conservative $\theta-$H\"older continuous weak solutions of Euler in the whole range $\theta<\frac13$ in \cite{Is2018}, thus confirming Onsager's prediction (see also \cite{BDLSV2019} for an improvement to \emph{dissipative} solutions).

Kolmogorov's Theory of Turbulence, and consequently Onsager's prediction, builds on the assumption of statistical 
homogeneity, isotropy and self-similarity, which implies, by scaling arguments and a dimensional analysis, that the structure functions $S_p(\ell):=\langle|v(x+\ell \vec{e} , t)-v(x, t)|^p \rangle$ obey
$$
S_p(\ell)\simeq \ell ^{\zeta_p},
$$
where $\zeta_{p}=\frac{p}{3}$, $p\geq 1$ and $\langle \cdot \rangle$ denotes an average over $x \in \T^3$, $\vec{e} \in \mathbb{S}^2$ and a probabilistic ensemble average that, in case of ergodicity of the random process governing the fluid motion, coincides with a long time average.  Thus Kolmogorov's prediction $\zeta_p=\frac{p}{3}$ gives the rather natural candidate $\theta=\frac{1}{3}$ as a H\"older regularity exponent for such turbulent solutions as noticed by Onsager.
It is well-known though that away from the case $p=3$, in which $\zeta_{p}=1$ is an exact result known as Kolomogorov's $\sfrac{4}{5}$-law, $\zeta_p=\frac{p}{3}$ is in disagreement with several experiments  \cite{BJPV98},  \cite{F95}. These phenomena go under the name of \emph{intermittency} and are a consequence of the breakdown of self-similarity of Turbulence (see for instance \cite{ET99}), which suggests that the exponents $\zeta_p$ can not be determined from a scaling analysis alone.  Furthermore, it has been observed numerically \cite{S81} that  at high Reynolds numbers, singularities (or large gradients) of solutions, and thus also energy dissipation, are not space filling and thus one should assume that the solution is smooth outside a closed singular set $S\subset \T^3$ with fractal dimension $\gamma <3$. Depending on how to model the fractal behaviour of such singularities, many physical models have been proposed to modify the K-41 theory (see for instance \cite[Chapter 2]{PV87}), for which the mathematical community subsequently tried to build solid analytical foundations (see \cite{Ey95,CS14} and references therein).  

Nevertheless, a precise statement on the energy conservation for such generalised Turbulence models, or better, a fractal counterpart of Onsager's conjecture,  seems, to the best of our knowledge, to be missing in the mathematical literature and it is indeed the content of this note. Let us point out that the question of energy conservation for weak solutions of \eqref{E} with a not space filling singular set has been addressed previously in \cite{S09,CKS97} but only for integer dimensions. The result of this note instead is more general and seems to be optimal since it matches with the predictions of some physical models and other mathematical results discussed below.


In 1978, Frisch, Sulem and Nelkin proposed in \cite{FSN78} a modification of the K-41 theory, called the $\beta-$model (also known as absolute curdling), in which the energy dissipation is assumed to be uniformly distributed on a homogeneous fractal with a given dimension $\gamma <3$. Under this assumption they deduce that the structure functions obey
\begin{equation}\label{Sfunc_beta}
S_p(\ell)\simeq \ell^{\zeta_p^*} \quad \text{with} \quad \zeta_p^*=3-\gamma+\frac{p}{3}(\gamma-2).
\end{equation}
Indeed, they deduce the validity of the previous relation as a consequence of \cite[Formula (3.5)]{FSN78} which asserts that a typical velocity increment over a distance $\ell$ is
\begin{equation}\label{holder_betamodel}
|v(x+\ell \vec{e})-v(x)|\simeq \ell^{\frac{\gamma-2}{3}}.
\end{equation}
We refer to \cite{Ey95,CS14} where the validity of such scaling laws has been mathematically consolidated for various fractal, or more generally \emph{multifractal}, models of Turbulence. Thus, in analogy to the heuristics leading from Kolmogorov's Theory (corresponding  to the case $\gamma=3$) to the Onsager exponent $\sfrac{1}{3}$, 
this fractal model suggests that the natural H\"older regularity exponent of a turbulent solution is given by $\theta=\sfrac{(\gamma-2)}{3}$, or, from the opposite perspective, given an H\"older regularity $\theta$ of the solution, the right fractal dimension which is consistent with the scaling law \eqref{holder_betamodel} is 
\begin{equation}
\label{Gamma_sharp}
\gamma=2+3\theta.
\end{equation}
Guided from these heuristics we prove 
a generalised energy conservation result for incompressible Euler equations, which can be viewed as the fractal generalisation of the proof of Constantin, E and Titi \cite{CET94}. 

In order to state the result, we introduce, for a time-dependent family of sets $\{S_{t} \}_{t \in [0, T]}\subseteq \T^3$, a uniform-in-time upper Minkowski dimension
\begin{equation}\label{def:Minkowskiunif2}
\overline{\dim}_{\mathcal{M}_{\infty}}\left( \{ S_{t} \}_{t \in [0, T]} \right):= \inf \left\{ s \geq 0: \, \overline{\mathcal{M}}^{s}_{\infty} \left( \{ S_{t} \}_{t \in [0, T]} \right)=0  \right\} \,,
\end{equation}
where $\overline{\mathcal{M}}^{ s}_{\infty}$ denotes a variant of the upper Minkowski content, obtained by computing the uniform-in-time shrinking rate (in volume) of the $\varepsilon$-neighbourhoods $\left(S_{t}\right)_{\varepsilon}:=\{ x \in \R^3: \, \dist(x,S_{t})\leq \varepsilon \}.$ More explicitly,  
\begin{equation}\label{def:Minkowskiunif}
\overline{\mathcal{M}}^{ s}_{\infty}\left( \{ S_{t} \}_{t \in [0, T]} \right):= \limsup_{\varepsilon \downarrow 0} \, \esssup_{t \in [0, T]} (2\varepsilon)^{s-3} \mathcal{H}^3\left( \left(S_{t}\right)_{\varepsilon} \right).
\end{equation}
The definitions \eqref{def:Minkowskiunif2}--\eqref{def:Minkowskiunif} are natural in that they coincide with the usual Minkowski  content and dimension (controlling in turn the box-counting and Hausdorff dimension, see for instance \cite[Chapter 5]{Mattila}) in case the set $S_{t}=S$ does not depend on time. We refer to Section \ref{sec:proof} for details. In this language we have the following 

\begin{theorem}\label{thm:main} 
Let $\theta \in \left(0, \sfrac{1}{3}\right)$ and $r \in \left(\sfrac{3}{(2 + 3 \theta)}, \infty\right]\,.$ Assume that $v \in L^3\left((0, T);B^\theta_{3r, \infty}(\T^3)\right) \cap C^0\left([0, T];L^2(\T^3)\right)$ is a weak solution of \eqref{E} such that for almost every $t \in [0, T]$ there exists a  closed set $S_{t} \subseteq \T^3$ with the following properties: 
\begin{enumerate}[label=(\roman*)]
\item \label{hyp1} the family $\{S_{t}\}_{t \in [0,T]}$ is not space filling in the sense that
\begin{equation}\label{e:dim}
\gamma:=\overline{\dim}_{\mathcal{M}_{\infty}}\left( \{ S_{t} \}_{t \in [0, T]} \right)< 3 \,.
\end{equation}
\item \label{hyp2} $v(t) \in C^1 \left(\T^3 \setminus S_{t}\right)$ with 
\begin{equation}\label{e:blowup}
\lvert \nabla v (x,t) \rvert \leq C(t) \dist(x, S_{t})^{-\kappa} \qquad \forall x \in \T^3 \setminus S_{t}
\end{equation} 
for some time dependent constant $C\in L^3((0,T)) $ and some
\begin{equation}\label{e:kappa}
0 \leq  \kappa < \frac{r-1}{r} \cdot \frac{3(1-\theta)}{1-3\theta}\,.
\end{equation}
\end{enumerate}
Then if 
\begin{equation}\label{e:gamma}
\gamma < \begin{cases}
 3 - \frac{r}{r-1} (1-3 \theta) \quad &\text{ if } \kappa \leq 1-\theta \,,\\
  3 - \frac{\kappa}{1-\theta} \cdot \frac{r}{r-1} (1-3 \theta) \quad &\text{ if } \kappa > 1-\theta \,,
 \end{cases}
\end{equation}
 $v$ conserves the kinetic energy, that is $ e_{v}(t)=e_v(0) $ for every $t\in [0,T]$.
\end{theorem}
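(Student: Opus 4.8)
The plan is to run the mollification (commutator) argument of Constantin--E--Titi \cite{CET94}, but localised around the time‑dependent singular set and with the cut‑off scale tuned against the uniform Minkowski condition. Fix a standard mollifier $\phi_\varepsilon$ and set $v_\varepsilon := v * \phi_\varepsilon$. Mollifying \eqref{E}, the smooth field $v_\varepsilon$ solves $\partial_t v_\varepsilon + \div(v\otimes v)_\varepsilon + \nabla p_\varepsilon = 0$ classically; pairing with $v_\varepsilon$, integrating over $\T^3$ and in time, and using $\div v_\varepsilon = 0$ to kill the pressure term and $\int_{\T^3}\nabla v_\varepsilon : v_\varepsilon\otimes v_\varepsilon\,dx$, one obtains
\[
 e_{v_\varepsilon}(t) - e_{v_\varepsilon}(0) = \int_0^t\int_{\T^3} \nabla v_\varepsilon : R_\varepsilon\,dx\,ds,\qquad R_\varepsilon := (v\otimes v)_\varepsilon - v_\varepsilon\otimes v_\varepsilon .
\]
Since $v\in C^0([0,T];L^2)$, the left‑hand side tends to $e_v(t)-e_v(0)$, so it suffices to show the flux on the right vanishes as $\varepsilon\downarrow 0$. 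Using the standard representation of $R_\varepsilon$ in terms of increments of $v$, I would record two families of estimates: the global Besov bounds $\|\nabla v_\varepsilon\|_{L^{3r}}\lesssim \varepsilon^{\theta-1}\|v(t)\|_{B^\theta_{3r,\infty}}$ and $\|R_\varepsilon\|_{L^{3r/2}}\lesssim \varepsilon^{2\theta}\|v(t)\|_{B^\theta_{3r,\infty}}^2$; and the pointwise bounds $|\nabla v_\varepsilon(x)|\lesssim C(t)\,\dist(x,S_t)^{-\kappa}$, $|R_\varepsilon(x)|\lesssim C(t)^2\varepsilon^2\dist(x,S_t)^{-2\kappa}$, valid whenever $\dist(x,S_t)\ge 2\varepsilon$ (then $B_\varepsilon(x)$ misses $S_t$, and \eqref{e:blowup} together with a first‑order Taylor expansion controls $\nabla v$ uniformly on $B_\varepsilon(x)$).

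The core step is to split $\int_{\T^3}$ into the contributions of the neighbourhood $(S_t)_\delta=\{\dist(\cdot,S_t)\le\delta\}$ and of its complement, with
\[
 \delta = \delta(\varepsilon) := \max\bigl\{\,2\varepsilon,\ \varepsilon^{(1-\theta)/\kappa}\,\bigr\}
\]
(and $\delta=2\varepsilon$ if $\kappa=0$). On the near region the gradient blow‑up is useless, so I would use H\"older with exponents $(3r,3r/2,\tfrac{r}{r-1})$, the Besov bounds, and the uniform‑in‑time Minkowski estimate $\mathcal{H}^3((S_t)_\delta)\lesssim \delta^{3-s}$ (valid for any fixed $s>\gamma$ by \eqref{e:dim}--\eqref{def:Minkowskiunif}), getting $\lesssim \varepsilon^{3\theta-1}\delta^{(3-s)(r-1)/r}\|v(t)\|_{B^\theta_{3r,\infty}}^3$. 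On the far region the pointwise bounds give an integrand $\lesssim C(t)^3\varepsilon^2\dist(x,S_t)^{-3\kappa}$, and a dyadic decomposition of $\{2^j\delta\le\dist(\cdot,S_t)\le 2^{j+1}\delta\}$, each annulus measured again by the Minkowski bound, produces $\lesssim C(t)^3\varepsilon^2\max\{1,\delta^{3-s-3\kappa}\}$ (with a harmless logarithm in the critical case). Plugging in $\delta$ and letting $s\downarrow\gamma$, one checks directly that both pieces carry a positive power of $\varepsilon$ exactly in the two regimes of \eqref{e:gamma}: if $\kappa\le 1-\theta$ then $\delta=2\varepsilon$ and the near piece is binding, requiring $\gamma<3-\tfrac{r}{r-1}(1-3\theta)$; if $\kappa>1-\theta$ then $\delta=\varepsilon^{(1-\theta)/\kappa}$ balances the singular factor $\varepsilon^{3\theta-1}$ against the small volume $\delta^{3-s}$, the near piece requiring $\gamma<3-\tfrac{\kappa}{1-\theta}\tfrac{r}{r-1}(1-3\theta)$, and in both cases the far piece is subordinate to the near one under the same condition (the constraint \eqref{e:kappa} is precisely what makes the bound on $\gamma$ nonempty in the second case). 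Finally, as $\|v(\cdot)\|_{B^\theta_{3r,\infty}},C(\cdot)\in L^3((0,T))$, a time integration and dominated convergence let $\varepsilon\downarrow 0$ in the flux, yielding $e_v(t)=e_v(0)$.

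The main obstacle --- and the point where the $\beta$‑model heuristics \eqref{holder_betamodel}--\eqref{Gamma_sharp} enter quantitatively --- is the right choice of $\delta(\varepsilon)$: the pointwise bound \eqref{e:blowup} is in fact \emph{worse} than the Besov average on a fixed $\varepsilon$‑neighbourhood of $S_t$ as soon as $\kappa>1-\theta$, so one cannot simply cut at scale $\varepsilon$; one has to push the cut‑off out to the scale $\varepsilon^{(1-\theta)/\kappa}$ at which the two gradient bounds cross over, trading a larger (but still small, by the Minkowski hypothesis) excluded neighbourhood of $S_t$ for a milder blow‑up contribution from its complement, and this trade‑off is what produces the two distinct dimensional thresholds. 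A secondary technical point is that \emph{every} volume bound must be uniform in $t$ --- which is exactly what \eqref{def:Minkowskiunif} is designed for --- and that the estimates have to be arranged so that only the $L^3$‑in‑time integrability of $\|v\|_{B^\theta_{3r,\infty}}$ and of $C(t)$ is used when passing to the limit.
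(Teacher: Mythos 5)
Your argument is correct, and it follows a genuinely different route from the one the paper ultimately takes: you run the Constantin--E--Titi commutator argument directly on the mollified Euler equation, splitting the flux integral $\int \nabla v_\varepsilon : R_\varepsilon$ into $(S_t)_{\delta}$ and its complement and tuning the cut-off $\delta(\varepsilon)=\max\{2\varepsilon,\varepsilon^{(1-\theta)/\kappa}\}$ against the uniform Minkowski bound; the near-region H\"older/Besov estimate and the far-region dyadic estimate reproduce exactly the two thresholds in \eqref{e:gamma}, and your verification that the far piece is subordinate to the near one is sound. The paper instead proves no Euler-specific estimate at all: it shows (Proposition \ref{p:main}) that \emph{any} vector field satisfying hypotheses (i)--(ii) and \eqref{e:gamma} automatically lies in $L^3((0,T);B^{\sfrac{1}{3}+}_{3,\infty}(\T^3))$ --- by splitting $\|v(\cdot+h)-v\|_{L^3}^3$ over $(S_t)_\varepsilon$, an intermediate annulus and the far region, with the analogous crossover choice $\varepsilon\simeq|h|^{(1-\theta)/\kappa}$ --- and then quotes \cite{CET94} as a black box. (The paper explicitly mentions that your direct approach was their original one and can be made rigorous.) The trade-off: your route is self-contained at the level of the equation but requires justifying the mollified energy balance for a weak solution in $C^0_tL^2_x$ and carrying the equation through every estimate; the paper's route isolates a statement of independent interest about functions with fractal singular sets, makes the consistency with the sharpness example of \cite{CCFS08} transparent (non-conservation forces $v\notin B^{\sfrac13+}_{3,\infty}$), and transfers verbatim to any other PDE whose conservation law holds under $B^{\sfrac13+}_{3,\infty}$ regularity. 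The key quantitative idea --- that for $\kappa>1-\theta$ the pointwise blow-up \eqref{e:blowup} is worse than the Besov average, forcing the cut-off out to the crossover scale and producing the second regime in \eqref{e:gamma} --- is identical in both proofs.
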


The theorem is stated in the larger class of Besov regularity;  for $r=\infty$ the latter corresponds to the usual H\"older space $C^\theta$ and in this case, the theorem gives the dimension \eqref{Gamma_sharp} as the sharp threshold for energy conservation as predicted from the $\beta-$model under a restriction on the blow-up rate of $\nabla v(x)$ as $x$ approaches the singular set. Such an assumption was not included in the original $\beta-$model \cite{FSN78}, where only a qualitative behaviour of the solution outside the singular set was enough to describe the model. It is somehow curious that to deduce the threshold \eqref{Gamma_sharp} the assumption on the gradient reads
\begin{equation}\label{blowup_1-theta}
|\nabla v(x, t)|\lesssim \dist(x,S_t)^{-(1-\theta)} \qquad \forall x \in \T^3 \setminus S_{t} \,,
\end{equation}
which is on one side a common behaviour one would expect from reasonable $\theta-$H\"older continuous function and on the other side it is clearly correlated with the definition of the singular set  of order $\theta$, $S(\theta)$, given in the seminal paper \cite{FP85} of Frisch and Parisi when introducing the so-called \emph{multifractal} model.  We also refer to \cite{Ey95} in which upper bounds on the Hausdorff dimension of $S(\theta)$ are given in terms of the Besov regularity of the solution. 

The energy conservation result of Theorem \ref{thm:main}  is a consequence of the general fact that, under the assumptions of Theorem \ref{thm:main}, every time dependent vector field $v=v(x,t)$ (not necessarily solving \eqref{E}) satisfies $v\in L^3((0,T); B^{\sfrac{1}{3}+}_{3,\infty}(\T^3))$ (see Proposition \ref{p:main} below). This improved Besov regularity of $v$  then implies energy conservation by  \cite{CET94}. We refer to Section \ref{sec:proof} for a detailed discussion on the hypothesis assumed in Theorem \ref{thm:main} together with an heuristic Euler-based proof (à la Constantin, E and Titi) which highlights the importance of those assumptions. The fact that in our case energy conservation is implied by the property on general functions of being $B^{\sfrac{1}{3}+}_{3,\infty}(\T^3)$ is also consistent with \cite{CCFS08} in which the authors constructed a divergence-free vector field in $B^{\sfrac{1}{3}}_{3,\infty}(\T^3)$ whose energy flux (that is basically the limit as $\delta\rightarrow 0$ of the right hand side of \eqref{en_moll} below) is non-zero. Thus it would be very unlikely that energy would be conserved while not having $v(t)\in B^{\sfrac{1}{3}}_{3,\infty}(\T^3)$. Also, we mention that the blow-up rate \eqref{blowup_1-theta} is also implicitly assumed in \cite{CKS97} since it appears in their definition of the space $\text{Lip}(\alpha_0,\alpha_1,0,k)$, thus in this direction our theorem generalises \cite{CKS97} since we can also deal with bigger blow-up rates, as well as non-integer dimensions.

Clearly, when the set $S_t$ is empty, assumption $(ii)$ has to be intended by posing  $\dist(x,S_t)=1$,  which would read as 
\begin{equation}
\label{L3_Lip}
|\nabla v(x,t)|\leq C(t) \text{ with } C\in L^3((0,T)) \,.
\end{equation}
The latter implies $v\in L^3((0,T);\text{Lip}(\T^3))$ and energy conservation follows immediately from \cite{CET94}.

Theorem \ref{thm:main} also generalises (in space) the result \cite{Isett17} by Isett which asserts that space filling singularities ($\gamma=3$) are needed in order to allow solutions of \eqref{E} belonging the endpoint regularity class $L^3((0,T);B^{\sfrac{1}{3}}_{3r,\infty})$ with $r>1$ to not conserve the kinetic energy. This is indeed another way to look at Theorem \ref{thm:main} that we highlight in the following 

\begin{corollary}\label{coroll}
Let $\theta\in \left(0,\sfrac13\right)$. Assume that $v\in L^3\left((0,T);C^\theta(\T^3)\right)\cap C^0\left([0,T];L^2(\T^3)\right)$ is a non-conservative weak solution of \eqref{E} such that for almost every $t\in[0,T]$ there exists a closed set $S_t\subseteq\T^3$ such that 
\begin{equation}\label{hyp_grad_cor}
v(t)\in C^1(\T^3\setminus S_t)\qquad \text{with} \quad |\nabla v(x,t)|\leq C(t) \dist(x,S_t)^{-\kappa}\, \quad \forall x\in \T^3\setminus S_t
\end{equation}
for some time dependent constant $C\in L^3((0,T))$ and some $\kappa\leq 1-\theta$. Then 
$$
\overline{\dim}_{\mathcal{M}_\infty}\left(\{S_t \}_{t\in [0,T]}\right) \geq 2+3\theta\,.
$$
\end{corollary}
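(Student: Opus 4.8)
The plan is to obtain Corollary \ref{coroll} as the contrapositive of Theorem \ref{thm:main} specialised to the endpoint exponent $r=\infty$, after disposing of a trivial degenerate case. First I would observe that if $\overline{\dim}_{\mathcal{M}_{\infty}}(\{S_t\}_{t\in[0,T]}) \geq 3$ there is nothing to prove, since $\theta < \sfrac13$ forces $2+3\theta < 3$. Hence I may assume $\gamma := \overline{\dim}_{\mathcal{M}_{\infty}}(\{S_t\}_{t\in[0,T]}) < 3$, so that hypothesis \ref{hyp1} of Theorem \ref{thm:main} is in force.

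Next I would check that, upon setting $r=\infty$, all the remaining hypotheses of Theorem \ref{thm:main} are met. Since $B^\theta_{3r,\infty}=C^\theta$ for $r=\infty$ (as noted right after the statement of the theorem), the assumption $v\in L^3((0,T);C^\theta)\cap C^0([0,T];L^2)$ coincides with the one in the corollary, and the range condition $r\in(\sfrac{3}{(2+3\theta)},\infty]$ is trivially satisfied. The gradient bound \eqref{hyp_grad_cor} is identical to \eqref{e:blowup} with the same $C\in L^3((0,T))$, and the admissibility condition \eqref{e:kappa}, which for $r=\infty$ reads $0\le\kappa<\frac{3(1-\theta)}{1-3\theta}$, holds because $\kappa\le 1-\theta < 3(1-\theta) \le \frac{3(1-\theta)}{1-3\theta}$ for $\theta\in(0,\sfrac13)$. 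Thus hypothesis \ref{hyp2} is also satisfied.

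Finally I would argue by contradiction. Suppose $\gamma<2+3\theta$. Because $\kappa\le 1-\theta$, we land in the first branch of \eqref{e:gamma}, which for $r=\infty$ becomes precisely $\gamma<3-(1-3\theta)=2+3\theta$, exactly the standing assumption. Theorem \ref{thm:main} then gives $e_v(t)=e_v(0)$ for every $t\in[0,T]$, contradicting the hypothesis that $v$ is non-conservative. Hence $\gamma\ge 2+3\theta$, which is the claim.

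I expect no genuine obstacle here: the entire analytic content sits in Theorem \ref{thm:main} (equivalently in Proposition \ref{p:main} on the improved $B^{\sfrac13+}_{3,\infty}$ regularity), and the corollary is a bookkeeping matter. The only points deserving a line of care are the degenerate case $\gamma\ge 3$ and the verification that $\kappa\le 1-\theta$ automatically places us inside both the admissible range \eqref{e:kappa} and the first branch of \eqref{e:gamma} when $r=\infty$. One could additionally remark that the second branch of \eqref{e:gamma} yields, for $\kappa>1-\theta$, the weaker lower bound $\overline{\dim}_{\mathcal{M}_{\infty}}(\{S_t\}_{t\in[0,T]})\ge 3-\frac{\kappa}{1-\theta}(1-3\theta)$, so the clean threshold $2+3\theta$ predicted by the $\beta$-model is recovered exactly in the regime $\kappa\le 1-\theta$.
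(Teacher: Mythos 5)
Your proposal is correct and is exactly the argument the paper intends: the corollary is stated as the contrapositive of Theorem \ref{thm:main} at the endpoint $r=\infty$ (where $B^\theta_{3r,\infty}=C^\theta$, $\tfrac{r-1}{r}=1$, and the first branch of \eqref{e:gamma} reduces to $\gamma<2+3\theta$), and your handling of the degenerate case $\gamma\geq 3$ and the verification of \eqref{e:kappa} are the only bookkeeping points needed.
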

Even if the previous result could be stated in the larger class of the Besov regular solutions considered in Theorem \ref{thm:main}, we prefer to state it only in the H\"older case since it naturally matches with the (apparently) sharp dimension \eqref{Gamma_sharp}. 

It is then natural to ask if the  convex integration techniques can be adapted in order to prove the sharpness of the previous corollary, namely to construct dissipative H\"older continuous weak solutions of the incompressible Euler equations whose space singularities concentrates on a spatial set of small dimension. Such a concentration of singularities has been recently achieved in \cite{DH21} in the temporal domain, where the two present authors proved the existence of a non-trivial lower bound for the Hausdorff dimension of the singular set of times for $C^\theta(\T^3\times [0,T])$ weak solutions together with the existence of solutions whose (non-empty) singular set of times has quantifiably small (in terms of $\theta$) Hausdorff dimension.  This answered to a question posed in \cite{CL20} (see also \cite{BCV19} where the idea of concentrating singularities on a small set of times via a convex integration scheme as been first introduced).  We conclude by mentioning that the non-conservative weak solutions constructed in \cite{DH21} (as well as the ones in \cite{BCV19,CL20}) satisfy $S_t=\emptyset$ for almost every $t>0$, thus they do not fall in the assumptions of our Theorem \ref{thm:main} since the validity of \eqref{L3_Lip}  would imply energy conservation as already discussed.

\subsection*{Acknowledgements}
The authors acknowledge the support of the SNF Grant $182565$.

\section{Preliminaries}\label{sec:prelim}
Along the paper, we will consider $\T^3$ as spatial domain, identifying it with the 3-dimensional cube $[0,1]^3 \subset \R^3 $. Thus, for any vector field $f:\T^3\to\R^3$ or scalar $f: \T^3 \to \R$ we will always work with its periodic extension to the whole space.

\subsection*{Besov spaces} We define for $ \theta \in (0,1)$ and $r \in [1, \infty]$ the Besov space 
\begin{equation*}
B^\theta_{r, \infty}(\T^3):= \{ f \in L^r(\T^3): [f]_{B^\theta_{r, \infty}(\T^3)} < \infty \}\,,
\end{equation*}
where
\begin{equation*}
[f]_{B^\theta_{r, \infty}(\T^3)}:= \sup_{h \neq 0} \frac{\norm{f(h+ \cdot) - f(\cdot)}_{L^r(\T^3)}}{ \lvert h \rvert^\theta}\,.
\end{equation*}
$B^\theta_{r, \infty}(\T^3)$ is a Banach space when equipped with the norm $\norm{f}_{B^\theta_{3r, \infty}(\T^3)} := \norm{f}_{L^r(\T^3)}  + [f]_{B^\theta_{r, \infty}(\T^3)}\,.$ It is worth to note that in our context, there is no advantage in considering the more general Besov spaces $B^\theta_{r, s}(\T^3)$ with $s \in [1, \infty)$ (defined analogously by taking the $L^s$-norm in $h$) because of the embedding $B^\theta_{r, s}(\T^3) \hookrightarrow B^\theta_{r, \infty}(\T^3)\,.$ For $r= \infty$ on the other hand, $B^\theta_{\infty, \infty}(\T^3)$ coincides with the space of H\"older continuous functions $C^\theta(\T^3)\,.$ Whenever we consider a time-dependent vector field $v: \T^3 \times (0, T) \rightarrow \R^3$, we denote by $[v(t)]_{X}$ and $\norm{v(t)}_{X}$ spatial norms computed for fixed time $t \in (0, T)$ and by $L^p((0, T);X)$ the associated Bochner spaces for $p \in [1, \infty]$.

\subsection*{Mollification} Let $\varphi\in\mathcal C^{\infty}_c(\R^d)$ be a smooth, nonnegative and compactly supported function with $\|\varphi\|_{L^1}=1.$  For any $\delta>0$ we define $\varphi_\delta(x)=\delta^{-d}\varphi(x/\delta)$ and we consider, for a vector field $f:\T^d\to\R^d$, its spatial regularisation
$
f_\delta(x)=(f\ast\varphi_\delta)(x)=\int_{\R^d}f(x-y)\varphi_\delta(y)\,dy.
$ 
We recall some classical mollification estimates as well as the commutator estimate of \cite{CET94} which is crucial to prove the rigidity part of Onsager's conjecture.
\begin{proposition}\label{prop:molli} Let $d \geq 1$. For a vector field $f: \T^d\to \R^d\,,$ $\theta\in(0,1)$ and $r\in[1,\infty]\,,$ we have the following
\begin{align}\label{molli:2}
\|\nabla f_\delta\|_{L^r(\T^d)}&\lesssim \delta^{\theta-1}\|f\|_{B^\theta_{r,\infty}(\T^d)},\\\label{molli:3}
\|f_\delta\otimes f_\delta-(f\otimes f)_\delta\|_{L^r(\T^d)}&\lesssim \delta^{2\theta}\|f\|^2_{B^{\theta}_{2r,\infty}(\T^d)},
\end{align}
where the constants in the inequalities above may depend on $\theta$ and $r$ but are independent of $\delta$.
\end{proposition}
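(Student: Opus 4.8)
The plan is to prove both estimates by the standard argument of Constantin, E and Titi, reducing everything to the translation differences that define the Besov seminorm. Write $\delta_y f(x) := f(x-y) - f(x)$, so that $\|\delta_y f\|_{L^r(\T^d)} \le |y|^\theta [f]_{B^\theta_{r, \infty}}$ for every $y \neq 0$, and recall that after a harmless rescaling we may assume $\supp \varphi \subseteq B_1$, so $\varphi_\delta$ is supported in a ball of radius $\delta$, with $\int \varphi_\delta = 1$ while $\int \nabla \varphi_\delta = 0$.

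For \eqref{molli:2}, I would first use $\int \nabla \varphi_\delta = 0$ to write $\nabla f_\delta(x) = \int \nabla \varphi_\delta(y)\, \delta_y f(x)\, dy$. Taking the $L^r$ norm in $x$ and applying Minkowski's integral inequality gives $\|\nabla f_\delta\|_{L^r} \le \int |\nabla \varphi_\delta(y)|\, \|\delta_y f\|_{L^r}\, dy \le [f]_{B^\theta_{r, \infty}} \int |\nabla \varphi_\delta(y)|\, |y|^\theta\, dy$. The change of variables $y = \delta z$ turns the last integral into $\delta^{\theta - 1} \int |\nabla \varphi(z)|\, |z|^\theta\, dz$, and the remaining integral is a finite constant depending only on $\varphi$ (hence on $d$, $\theta$), which yields the claim; the case $r = \infty$ is identical.

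For \eqref{molli:3}, the key is the algebraic identity
\begin{equation*}
f_\delta \otimes f_\delta - (f \otimes f)_\delta = (f_\delta - f) \otimes (f_\delta - f) - \int \varphi_\delta(y)\, \delta_y f \otimes \delta_y f\, dy,
\end{equation*}
which I would verify by expanding $f(x-y) \otimes f(x-y) - f(x) \otimes f(x) = \delta_y f \otimes \delta_y f + f(x) \otimes \delta_y f + \delta_y f \otimes f(x)$, integrating against $\varphi_\delta(y)$, recognising $\int \varphi_\delta(y)\, \delta_y f(x)\, dy = f_\delta(x) - f(x)$, and subtracting the corresponding expansion of $f_\delta(x) \otimes f_\delta(x) - f(x) \otimes f(x)$. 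Both terms on the right are then estimated the same way, using the pointwise bound $|g \otimes h| \le |g|\,|h|$, H\"older's inequality $\|g \otimes h\|_{L^r} \le \|g\|_{L^{2r}} \|h\|_{L^{2r}}$, Minkowski's integral inequality, and $\|\delta_y f\|_{L^{2r}} \le |y|^\theta [f]_{B^\theta_{2r, \infty}}$: this gives $\|f_\delta - f\|_{L^{2r}} \le \int \varphi_\delta(y) |y|^\theta\, dy\, [f]_{B^\theta_{2r, \infty}} \lesssim \delta^\theta [f]_{B^\theta_{2r, \infty}}$ and likewise $\| \int \varphi_\delta(y)\, \delta_y f \otimes \delta_y f\, dy \|_{L^r} \le \int \varphi_\delta(y) |y|^{2\theta}\, dy\, [f]^2_{B^\theta_{2r, \infty}} \lesssim \delta^{2\theta} [f]^2_{B^\theta_{2r, \infty}}$. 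Squaring the first bound, adding the second, and absorbing the seminorms into the full norm gives \eqref{molli:3}.

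There is no serious obstacle here; the only points requiring care are getting the commutator identity exactly right (the signs and the extra $(f_\delta - f) \otimes (f_\delta - f)$ term), checking that compact support of $\varphi$ makes all the $y$-integrals converge with the claimed powers of $\delta$, and recording that every implicit constant depends only on fixed data (the kernel $\varphi$, hence $d$, and the exponents $\theta, r$) and not on $\delta$ — in particular the endpoint $r = \infty$ needs no modification, since $L^\infty$ behaves like every other $L^p$ throughout these manipulations.
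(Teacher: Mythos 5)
Your proof is correct and is exactly the classical Constantin--E--Titi argument that the paper invokes without proof (it only cites \cite{CET94} for these estimates): the zero-mean trick $\int\nabla\varphi_\delta=0$ for \eqref{molli:2}, and the commutator identity $f_\delta\otimes f_\delta-(f\otimes f)_\delta=(f_\delta-f)\otimes(f_\delta-f)-\int\varphi_\delta(y)\,\delta_yf\otimes\delta_yf\,dy$ together with Minkowski's integral inequality and H\"older for \eqref{molli:3}. All identities and exponent computations check out, and the constants depend only on $\varphi$, $d$, $\theta$, $r$ as claimed.
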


\subsection*{Constants} Throughout the paper we use the symbol $\lesssim$ to indicate that an inequality holds with a constant which may depend on all parameters of assumptions $\ref{hyp1}$ and $\ref{hyp2}$, i.e.  of the parameters $\gamma$, $\kappa$, $r$, $\theta$.

\section{Some remarks and Proof of Theorem \ref{thm:main}}\label{sec:proof}

Before giving the proof of our main result we list some remarks on the hypothesis of \ref{thm:main} as well as an heuristic computation which allows to guess, at least in the H\"older case, the threshold $\gamma=2+3\theta$.

\subsection*{Hypothesis \ref{hyp1}} In case that $S_{t}=S$ does not depend on time, the assumption \eqref{e:dim} reduces to the requirement that the usual upper Minkowski dimension $\gamma= \overline{\dim}_{\mathcal{M}}( S )<3\,,$ where we recall for completeness that
\begin{equation*}
\overline{\dim}_{\mathcal{M}} (S):= \inf \{ s \geq 0: \, \overline{\mathcal{M}}^{ s}(S)=0\} \qquad \text{ with } \qquad \overline{\mathcal{M}}^{ s} (S):= \limsup_{\varepsilon \downarrow 0} (2 \varepsilon)^{s-3} \mathcal{H}^3\left( ( S)_{\varepsilon} \right) \,.
\end{equation*}
In the time-dependent case, the notion of dimension \eqref{def:Minkowskiunif2} is more restrictive than taking the (essential) supremum in time of the upper Minkowski dimensions since the shrinking rate \eqref{def:Minkowskiunif} is required to hold uniform in time. Indeed, if hypothesis \eqref{e:dim} holds, then 
$$\overline{\mathcal{M}}^{ \bar \gamma}_{\infty} \left( \{ S_{t} \}_{t \in [0, T]} \right) =0 \qquad \text{ for every } \bar \gamma > \gamma$$
and hence in particular, there exists $\varepsilon_{0}= \varepsilon_{0}(\bar \gamma)\in (0,1)$ such that for a.e. $t \in [0,T]$
\begin{equation}\label{e:cond}
\mathcal{H}^3 \left( ( S_{t})_{\varepsilon}\right) \leq \varepsilon^{3- \bar \gamma} \qquad \forall\, \varepsilon \leq \varepsilon_{0} \,.
\end{equation}
It is now immediate to deduce that hypothesis  \eqref{e:dim} implies $\overline{\dim}_{\mathcal M}(S_{t}) \leq \gamma$ for almost every $t \in [0,T]$. In other words, it holds in general that 
$$
\overline{\dim}_{\mathcal{M}_\infty}\left(\{S_t\}_{t\in [0,T]}\right)\geq \esssup_{t\in [0,T]}\left( \overline{\dim}_{\mathcal M}(S_{t})\right).
$$
 Conversely, if we only require that  $\esssup_{t \in[0, T]}\dim_{\mathcal{M}} (S_{t}) \leq \gamma <3 $, then the threshold $\varepsilon_{0}= \varepsilon_{0}(\bar \gamma, t)$ in \eqref{e:cond} can in general not assumed to be uniform in time and this is in fact the main reason for introducing the uniform-in-time Minkowski dimension \eqref{def:Minkowskiunif}.

\subsection*{Restrictions on $r$ and $\kappa$} The lower bound $r > \sfrac{3}{(2+3 \theta)}>1$, as well as the restriction \eqref{e:kappa} on $\kappa$, is a simple compatibility criterion which guarantees that the right hand-side of \eqref{e:gamma} is strictly positive; otherwise the statement of Theorem \ref{thm:main} is empty. Observe that $r \downarrow 1$ as $\theta \uparrow \sfrac{1}{3}$, which is compatible with the Besov regularity classes considered by \cite{Isett17} at the sharp Onsager exponent $\theta=\sfrac{1}{3}\,.$

\subsection*{Dimension $d \geq 2$} Theorem \ref{thm:main} and its proof generalise easily to any dimension $d \geq 2$ after the following straightforward changes in the statement: in assumption $(i)$ we require the singular set to be not space filling, meaning that $\gamma<d$. In assumption \ref{hyp2} the restriction on $\kappa$ should now read $0\leq \kappa <  \frac{r-1}{r} \cdot \frac{d(1-\theta)}{1-3\theta} \,.$  Correspondingly, the upper bound on $\gamma$ in \eqref{e:gamma} should be replaced by 
$$
\gamma < \begin{cases}
 d - \frac{r}{r-1} (1-3 \theta) \quad &\text{ if } \kappa \leq 1-\theta \,, \\
  d - \frac{\kappa}{1-\theta} \cdot \frac{r}{r-1} (1-3 \theta) \quad &\text{ if } \kappa > 1-\theta \,.
 \end{cases}
\,$$

\subsection*{Heuristic proof à la Constantin, E and Titi \cite{CET94}} To motivate the validity of the threshold $\gamma=2+3\theta$ for the upper Minkowski dimension of the spatial singular set, we give a heuristic proof of energy conservation à la Constantin, E and Titi in the easier case in which $v\in L^3((0,T);C^\theta(\T^3))$ and the singular set $S\subseteq \T^3$  is not depending on time. This proof can be made rigorous and was actually our original idea to tackle the problem; only in the end we realised that the proof of Theorem \ref{thm:main} can be reduced to a general property of Besov functions (see Proposition \ref{p:main} below).

The well-known energy conservation proof \cite{CET94} is based on spatial regularisations $(v_{\delta}, p_{\delta})$ of $(v, p)$ at scale $\delta>0$, which solve the Euler-Reynolds system 
\begin{equation*}
\partial_{t} v_{\delta} + \div( v_{\delta} \otimes v_{\delta}) + \nabla p_{\delta} = \div R_{\delta}
\end{equation*}
with $R_{\delta}:= v_{\delta} \otimes v_{\delta} - (u \otimes u)_{\delta}$. Scalar multiplying the previous regularised equation by $v_\delta$ and integrating on $\T^3$ we obtain
\begin{equation}\label{en_moll}
\frac{d}{dt} e_{v_{\delta}}(t)= - \int_{\T^3} R_{\delta} \cdot \nabla v_{\delta} \, dx.
\end{equation}
Since we assumed the solution $v$ to be $C^1$ outside $S$, it is convenient to split the scalar product in the right-hand side of \eqref{en_moll} in $(S)_\varepsilon :=\{ x \in \R^3: \, \dist(x,S)\leq \varepsilon \}$ and in its complement
$$
\frac{d}{dt} e_{v_{\delta}}(t)= - \int_{(S)_\varepsilon} R_{\delta} \cdot \nabla v_{\delta} \, dx-\int_{\T^3 \setminus S_\varepsilon} R_{\delta} \cdot \nabla v_{\delta} \, dx.
$$
In order to look only at regular points of $R_\delta:\nabla v_\delta$ in $\T^3 \setminus S_\varepsilon$, we need to keep the mollification distant from the singular set $S$, which means that 
\begin{equation}\label{eps_delt_compatibility}
\delta\lesssim \varepsilon.
\end{equation}
From the assumption $\dim_\mathcal{M}(S)=\gamma<3$ we deduce $\mathcal{H}^3((S)_\varepsilon)\lesssim \varepsilon^{3-\gamma}$  for all sufficiently small $\varepsilon>0$, which together with \eqref{molli:2} and \eqref{molli:3} yields 
\begin{equation}\label{gamma_threshold_heur}
\left|\int_{(S)_\varepsilon} R_{\delta} \cdot \nabla v_{\delta} \, dx\right|\lesssim [v(t)]^3_{C^\theta(\T^3)} \delta^{3\theta-1}\varepsilon^{3-\gamma}.
\end{equation}
Since $\theta<\sfrac13$, the exponent of $\delta$ in the right-hand side of \eqref{gamma_threshold_heur} is negative and hence the best choice of $\delta$ (in terms of $\varepsilon$) is the biggest admissible one. The compatibility condition \eqref{eps_delt_compatibility} implies $\delta\simeq \varepsilon$, which gives
$$
\left|\int_{(S)_\varepsilon} R_{\varepsilon} \cdot \nabla v_{\varepsilon} \, dx\right|\lesssim [v(t)]^3_{C^\theta(\T^3)} \varepsilon^{2+3\theta-\gamma}.
$$
As $\varepsilon \rightarrow 0$, this term goes to zero if $\gamma<2+3\theta$,  which after integration in time, implies energy conservation provided that also the second term converges to zero.  The latter can be estimated by 
$$
\left|\int_{\T^3 \setminus (S)_\varepsilon} R_{\varepsilon} \cdot \nabla v_{\varepsilon} \, dx\right|\lesssim [v(t)]^2_{C^\theta(\T^3)}\varepsilon^{2\theta} \int_{\T^3 \setminus S_\varepsilon}|\nabla v_\epsilon|\,dx.
$$
For a general H\"older continuous function the quantity  $  \int_{\T^3 \setminus S_\varepsilon}|\nabla v_\epsilon|\,dx$ is not bounded in $\varepsilon$ and more precisely, its blow-up rate will depend on the asymptotic behaviour of $|\nabla v(x,t)|$ as $x$ approaches the singular set. That is why, in order to conclude energy conservation, we also need such a hypothesis which is made precise in $\ref{hyp2}$.

\subsection*{Proof of Theorem \ref{thm:main}} Once an assumption on the blow-up rate of the gradient of the solution when approaching the singular set is needed, the proof of Theorem \ref{thm:main} can be reduced to a general property of Besov functions which is stated in the next proposition. The energy conservation of Theorem \ref{thm:main} then follows from the result of Constantin, E and Titi which asserts that weak solutions of Euler in the class $L^3((0,T);B^{\sfrac{1}{3} +}_{3,\infty}(\T^3))\cap C^0([0,T];L^2(\T^3))$ conserve the kinetic energy, where 
$$
B^{\frac{1}{3} +}_{3,\infty}(\T^3)=\bigcup_{\alpha>0}B^{\frac{1}{3} +\alpha}_{3,\infty}(\T^3) \,.
$$

\begin{proposition}\label{p:main}
Let $\theta, r, v, S_t, \gamma, \kappa$ as in the statement of Theorem \ref{thm:main} with the only exception that the time dependent vector field $v=v(x,t)$ does not necessarily solve Euler. 
Then if 
\begin{equation}\label{e:gamma_p}
\gamma < \begin{cases}
 3 - \frac{r}{r-1} (1-3 \theta) \quad &\text{ if } \kappa \leq 1-\theta \,, \\
  3 - \frac{\kappa}{1-\theta} \cdot \frac{r}{r-1} (1-3 \theta) \quad &\text{ if } \kappa > 1-\theta \,,
 \end{cases}
\end{equation}
we have $v\in L^3((0,T);B^{\sfrac{1}{3}+}_{3,\infty}(\T^3))$.
\end{proposition}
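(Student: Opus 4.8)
The plan is to prove the quantitative statement that there is a constant $\alpha=\alpha(\theta,r,\gamma,\kappa)>0$ such that $\|v(t)\|_{B^{\sfrac{1}{3}+\alpha}_{3,\infty}(\T^3)}\lesssim\|v(t)\|_{B^\theta_{3r,\infty}(\T^3)}+C(t)$ for a.e.\ $t\in[0,T]$, with implicit constant independent of $t$; since $v\in L^3((0,T);B^\theta_{3r,\infty})$ and $C\in L^3((0,T))$, cubing and integrating in time then gives $v\in L^3((0,T);B^{\sfrac{1}{3}+\alpha}_{3,\infty})\subseteq L^3((0,T);B^{\sfrac{1}{3}+}_{3,\infty})$. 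So the whole matter reduces to a spatial estimate at a fixed good time $t$. Since \eqref{e:gamma_p} is a strict inequality, I would first fix $\bar\gamma\in(\gamma,3)$ slightly above $\gamma$ so that \eqref{e:gamma_p} still holds with $\bar\gamma$ in place of $\gamma$ and $\bar\gamma+3\kappa\neq3$; then by \eqref{e:cond} there is $\varepsilon_0=\varepsilon_0(\bar\gamma)$, uniform in $t$, with $\mathcal H^3((S_t)_\varepsilon)\le\varepsilon^{3-\bar\gamma}$ for all $\varepsilon\le\varepsilon_0$. For $h$ small I would then estimate $\|v(t,\cdot+h)-v(t,\cdot)\|_{L^3(\T^3)}$ by splitting $\T^3$ into the tube $(S_t)_\varepsilon$ around the singular set and its complement, where $\varepsilon=|h|^\lambda$ with $\lambda\in(0,1)$ to be optimised (chosen so that $2|h|\le\varepsilon\le\varepsilon_0$ for $|h|$ below a $t$-independent threshold).

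On the tube I would discard smoothness and use only the Besov bound: by H\"older's inequality with exponents $r$ and $\frac{r}{r-1}$ together with $\mathcal H^3((S_t)_\varepsilon)\le\varepsilon^{3-\bar\gamma}$, the tube contribution is $\lesssim[v(t)]_{B^\theta_{3r,\infty}}|h|^{\theta}\varepsilon^{(3-\bar\gamma)\frac{r-1}{3r}}$. On the complement, since $2|h|\le\varepsilon$ the segment $[x,x+h]$ with $x\notin(S_t)_\varepsilon$ stays in $\{\dist(\cdot,S_t)\ge\varepsilon/2\}\subseteq\T^3\setminus S_t$, where $v(t)$ is $C^1$; by the fundamental theorem of calculus, Minkowski's integral inequality, a translation, and the blow-up hypothesis \eqref{e:blowup}, the complement contribution is $\lesssim C(t)\,|h|\,\big(\int_{\T^3\setminus(S_t)_{\varepsilon/2}}\dist(x,S_t)^{-3\kappa}\,dx\big)^{1/3}$.

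The technical heart is that last integral, which I would estimate by a dyadic decomposition of $\T^3\setminus(S_t)_{\varepsilon/2}$ into the shells $\{2^{j-1}\varepsilon\le\dist(\cdot,S_t)<2^{j}\varepsilon\}$, bounding $\dist^{-3\kappa}$ by $(2^{j-1}\varepsilon)^{-3\kappa}$ and the measure of the $j$-th shell by $\min\{1,(2^{j}\varepsilon)^{3-\bar\gamma}\}$, and summing the geometric series: this gives $\lesssim1$ if $\bar\gamma+3\kappa<3$ and $\lesssim\varepsilon^{3-\bar\gamma-3\kappa}$ if $\bar\gamma+3\kappa>3$ (the borderline being excluded by the choice of $\bar\gamma$). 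Combining the two contributions with $\varepsilon=|h|^\lambda$ I obtain $\|v(t,\cdot+h)-v(t,\cdot)\|_{L^3}\lesssim([v(t)]_{B^\theta_{3r,\infty}}+C(t))(|h|^{e_1(\lambda)}+|h|^{e_2(\lambda)})$ with $e_1(\lambda)=\theta+\lambda(3-\bar\gamma)\frac{r-1}{3r}$ increasing in $\lambda$ and $e_2(\lambda)$ equal to $1$ (if $\bar\gamma+3\kappa\le3$) or $1-\frac{\lambda}{3}(\bar\gamma+3\kappa-3)$ (if $\bar\gamma+3\kappa>3$), which is decreasing. Hence I only need to pick $\lambda\in(0,1)$ with $\min\{e_1(\lambda),e_2(\lambda)\}>\sfrac{1}{3}$, i.e.\ $\lambda$ in the open interval $(\lambda_*,\min\{1,\lambda^*\})$ with $\lambda_*=\frac{(1-3\theta)r}{(3-\bar\gamma)(r-1)}$ and $\lambda^*=\frac{2}{\bar\gamma+3\kappa-3}$ ($\lambda^*=+\infty$ if $\bar\gamma+3\kappa\le3$).

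The step I expect to demand the most care is verifying that this interval is nonempty, and it is precisely here that \eqref{e:gamma_p} is used: $\lambda_*<1$ is equivalent to $\bar\gamma<3-\frac{r}{r-1}(1-3\theta)$, which is the first alternative in \eqref{e:gamma_p} and a fortiori the second, while, when $\bar\gamma+3\kappa>3$, an elementary computation that crucially uses $\theta<\sfrac{1}{3}$ (so $3(1-\theta)>2$) together with $r>1$ shows that each alternative in \eqref{e:gamma_p} forces $\lambda_*<\lambda^*$; the dichotomy $\kappa\lessgtr1-\theta$ in \eqref{e:gamma_p} corresponds to whether the constraint $\lambda<\lambda^*$ is active. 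Fixing such a $\lambda$ and setting $\alpha:=\min\{e_1(\lambda),e_2(\lambda)\}-\sfrac{1}{3}>0$ gives $\|v(t,\cdot+h)-v(t,\cdot)\|_{L^3}\lesssim([v(t)]_{B^\theta_{3r,\infty}}+C(t))\,|h|^{\sfrac{1}{3}+\alpha}$ for $|h|$ below a $t$-independent threshold, and for larger $|h|$ the left side is trivially $\le 2\|v(t)\|_{L^3}\lesssim\|v(t)\|_{B^\theta_{3r,\infty}}|h|^{\sfrac{1}{3}+\alpha}$; taking the supremum over $h$ yields the claimed pointwise-in-$t$ bound, and integrating its cube over $[0,T]$ concludes the proof.
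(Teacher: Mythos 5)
Your argument is correct, and it reaches the conclusion by a genuinely different route on the key estimate. The paper splits $\T^3$ into \emph{three} regions — the tube $(S_t)_\varepsilon$, the annulus $(S_t)_{\varepsilon_0}\setminus(S_t)_\varepsilon$, and the far region — and on the annulus it interpolates, writing $|v(x+h)-v(x)|^3=|v(x+h)-v(x)|^2\cdot|v(x+h)-v(x)|$ and putting the square against the Besov seminorm in $L^{2r}$ (gaining $|h|^{2\theta}$) and the remaining factor in $L^{r/(r-1)}$ against the gradient bound, which leads to the weight $\dist(x,S_t)^{-\frac{r}{r-1}\kappa}$ and then to the two explicit choices $\varepsilon=2|h|$ or $\varepsilon=2|h|^{(1-\theta)/\kappa}$ according to $\kappa\lessgtr 1-\theta$. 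You instead use a two-region split and, on the whole complement of the tube, put all three powers of the increment against the gradient via the fundamental theorem of calculus and Minkowski's inequality, reducing everything to the weighted integral of $\dist(x,S_t)^{-3\kappa}$ and then optimising a free exponent $\varepsilon=|h|^\lambda$. Both dyadic-shell computations are the same in spirit, and your feasibility check ($\lambda_*<\min\{1,\lambda^*\}$ under \eqref{e:gamma_p}, using $B:=\frac{r}{r-1}(1-3\theta)>1-3\theta$) is sound. Two things your version buys: (a) a pointwise-in-time bound $\|v(t)\|_{B^{1/3+\alpha}_{3,\infty}}\lesssim \|v(t)\|_{B^\theta_{3r,\infty}}+C(t)$, which is formally stronger than the integrated estimate \eqref{cond_B_+} the paper settles for; and (b) for finite $r$ and $\kappa>1-\theta$ your feasibility condition is $3-\bar\gamma>\frac{3\kappa B}{2+B}$, which is strictly weaker than the paper's $3-\bar\gamma>\frac{\kappa}{1-\theta}B$ (they coincide only at $r=\infty$), so your argument in fact admits a slightly larger range of $\gamma$ in the Besov case. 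The price is that you use none of the Besov gain on the complement, which is why the comparison is not a priori obvious and had to be checked; your check is correct. One cosmetic point: the shell-measure bound $\min\{1,(2^j\varepsilon)^{3-\bar\gamma}\}$ is only literally guaranteed by \eqref{e:cond} for $2^j\varepsilon\le\varepsilon_0$; for the finitely many shells with $2^j\varepsilon>\varepsilon_0$ one should simply use measure $\le 1$ and integrand $\lesssim \varepsilon_0^{-3\kappa}$, which contributes an $O(1)$ term and does not affect anything.
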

\begin{proof}
We fix $\bar \gamma\in (\gamma, 3)$ (which will be chosen sufficiently close to $\gamma $ at the very end) and the corresponding threshold $\varepsilon_0=\varepsilon_0(\bar \gamma)$ given by hypothesis $\ref{hyp1}$ such that \eqref{e:cond} holds for almost every $t\in (0,T)$. 
Being $\varepsilon_0$ uniform in time, it is enough to prove that $\exists \,\alpha>0$ such that 
\begin{equation}\label{cond_B_+}
\int_0^T \|v(\cdot +h,t)-v(\cdot, t)\|^3_{L^3(\T^3)}\,dt \lesssim |h|^{1+\alpha}\quad \forall \,|h|<\frac{\varepsilon_0}{2}.
\end{equation}
Thus let us fix any $h\in B_{\sfrac{\varepsilon_0}{2}}(0)$. For every $\varepsilon>0$ such that 
\begin{equation}
\label{cond_h_eps}
2|h|\leq \varepsilon<\varepsilon_0
\end{equation}
we consider the neighbourhoods $(S_t)_\varepsilon=\{ x\in \T^3\, :\, \dist(x,S_t)\leq \varepsilon\}$ and we split
\begin{align*}
\|v(\cdot +h,t)-v(\cdot, t)\|^3_{L^3(\T^3)}&= \int_{(S_t)_\varepsilon} |v(x +h,t)-v(x, t)|^3\,dx+ \int_{(S_t)_{\varepsilon_0}\setminus (S_t)_{\varepsilon}} |v(x +h,t)-v(x, t)|^3\,dx\\
&+ \int_{\T^3\setminus (S_t)_{\varepsilon_0}} |v(x +h,t)-v(x, t)|^3\,dx=I+II+III.
\end{align*}
Note that the compatibility condition \eqref{cond_h_eps} ensures that in the integrals $II$ and $III$ we only see regular points of $v$. 

We estimate $I$ for a.e. $t \in (0,T)$ by using \eqref{e:cond} together with the assumption $v(t)\in B^\theta_{3r,\infty}(\T^3)$
\begin{equation}
\label{est_I}
I\leq \left( \int_{(S_t)_\varepsilon}  |v(x +h,t)-v(x, t)|^{3r}\,dx \right)^\frac{1}{r}\mathcal{H}^3\left((S_t)_\varepsilon \right)^{\frac{r-1}{r}}\leq [v(t)]_{B^{\theta}_{3r,\infty}(\T^3)}^3 |h|^{3\theta}\varepsilon^{(3-\bar \gamma)\frac{r-1}{r}}.
\end{equation}
From assumption $\ref{hyp2}$, together with the constraint \eqref{cond_h_eps}, we deduce that $\forall \, s\in[0,1]$ and every $x\in \T^3\setminus (S_t)_\varepsilon$ we have the bound
\begin{equation}
\label{est_grad}
|\nabla v(x+sh,t)|\leq C(t) \dist (x+sh,S_t)^{-\kappa}\leq C(t)\left( \dist(x,S_t)-|h|\right)^{-\kappa}\leq C(t)2^\kappa \dist (x,S_t)^{-\kappa}.
\end{equation}
Combining \eqref{est_grad} with the $C^1$-regularity of $v$ on $\T^3 \setminus (S_{t})_{\varepsilon}$ allows to estimate
\begin{equation}
\label{est_III}
III\leq |h|^3 \int_0^1 \int_{\T^3\setminus (S_t)_{\varepsilon_0}}|\nabla v(x+sh,t)|^3 \,dx\,ds\lesssim C^3(t)|h|^3 \varepsilon_{0}^{-3\kappa} \,.
\end{equation}
We are left with the second term $II$ which is indeed the only non-trivial estimate. By using again \eqref{est_grad}, we have by H\"older
\begin{align}
II&\leq \left(\int_{\T^3}  |v(x +h,t)-v(x, t)|^{2r}\,dx\right)^{\frac{1}{r}} \left(\int_{(S_t)_{\varepsilon_0}\setminus (S_t)_{\varepsilon}}  |v(x +h,t)-v(x, t)|^{\frac{r}{r-1}}\,dx\right)^{\frac{r-1}{r}}\nonumber\\
&\lesssim |h|^{2\theta}[v(t)]^2_{B^\theta_{2r,\infty}(\T^3)}C(t)|h|\left(\int_{(S_t)_{\varepsilon_0}\setminus (S_t)_{\varepsilon}} \dist(x,S_t)^{-\frac{r}{r-1}\kappa}\,dx\right)^{\frac{r-1}{r}}.\label{est_II}
\end{align}
To estimate the last integral we dyadically decompose the set 
$$
(S_t)_{\varepsilon_0}\setminus (S_t)_{\varepsilon}\subset \bigcup_{i=j_{0}}^{j} \left \{ x \in \T^3: 2^{-i} \leq \dist(x, S_{t})\leq 2^{-(i-1)} \right\}  =: \bigcup_{i=j_{0}}^{j} A_{t, i} \,,
$$
for some fixed $j_0\in \mathbb{N}$ (depending only on $\varepsilon_0$) and $j\in \mathbb{N}$, $j\geq j_0$ which satisfies 
\begin{equation}
\label{eps_i}
\frac{\varepsilon}{2}\leq 2^{-j}\leq \varepsilon.
\end{equation}
Thus, by using again \eqref{e:cond},  we have 
\begin{align*}
\int_{(S_t)_{\varepsilon_0}\setminus (S_t)_{\varepsilon}} \dist(x,S_t)^{-\frac{r}{r-1}\kappa}\,dx&\leq \sum_{i=j_0}^{j} \int_{A_{t,i}} \dist(x,S_t)^{-\frac{r}{r-1}\kappa}\,dx\leq \sum_{i=j_0}^{j}2^{i\frac{r}{r-1}\kappa}\mathcal{H}^3(A_{t,i})\\
&\leq  \sum_{i=j_0}^{j} 2^{i\frac{r}{r-1}\kappa}\mathcal{H}^3\left((S_t)_{2^{-(i-1)}}\right)\lesssim  \sum_{i=0}^{j}2^{\left(\frac{r}{r-1}\kappa - (3-\bar \gamma)\right)i},
\end{align*}
from which we deduce that 
$$
\left(\int_{(S_t)_{\varepsilon_0}\setminus (S_t)_{\varepsilon}} \dist(x,S_t)^{-\frac{r}{r-1}\kappa}\,dx\right)^{\frac{r-1}{r}}\lesssim \sum_{i=0}^j 2^{\left(\kappa - (3-\bar \gamma)\frac{r-1}{r}\right)i}\,.
$$
The behaviour of the previous sum depends on the sign of the exponent $\kappa- (3-\bar \gamma)\frac{r-1}{r}$, but we can bound it by
$$
\sum_{i=0}^j 2^{\left(\kappa - (3-\bar \gamma)\frac{r-1}{r}\right)i}\leq C_{\kappa,\bar \gamma, r} \, j \, 2^{j\max (0,\kappa- (3-\bar \gamma)\frac{r-1}{r})}\lesssim | \log \varepsilon|\varepsilon^{-\max \left(0,\kappa- (3-\bar \gamma)\frac{r-1}{r}\right)},
$$
where we used the choice \eqref{eps_i} in the last inequality.  Inserting this last estimate in \eqref{est_II}, we obtain 
$$
II\lesssim C(t)[v(t)]^2_{B^\theta_{3r,\infty}(\T^3)}|h|^{1+2\theta}|\log \varepsilon|\varepsilon^{-\max \left(0,\kappa- (3-\bar \gamma)\frac{r-1}{r}\right)},
$$
which together with \eqref{est_I} and \eqref{est_III}, by also integrating in time, yields for $2\lvert h \rvert < \varepsilon_{0}$
\begin{align}
\int_0^T \|v(\cdot +h,t)-v(\cdot, t)\|^3_{L^3(\T^3)}\,dt &\lesssim[v]^3_{L^3((0,T);B^\theta_{3r,\infty}(\T^3))}|h|^{3\theta}\varepsilon^{(3-\bar \gamma)\frac{r-1}{r}}+\|C\|^3_{L^3((0,T))}|h|^3\nonumber \\
&+\|C\|_{L^3((0,T))}[v]^2_{L^3((0,T);B^\theta_{3r,\infty}(\T^3))} |h|^{1+2\theta}\left| \log \varepsilon\right| \varepsilon^{-\max \left(0,\kappa- (3-\bar \gamma)\frac{r-1}{r}\right)}.\label{est_ugly}
\end{align}
Finally, if $\kappa\leq 1-\theta$, we conclude by choosing $\bar \gamma>\gamma$ such that 
$$
3-\frac{r}{r-1}(1-\theta)<\bar\gamma<3-\frac{r}{r-1}(1-3\theta)
$$
so that, with the choice $\varepsilon=2|h|$, \eqref{est_ugly} reduces to
\begin{align*}
\int_0^T \|v(\cdot +h,t)-v(\cdot, t)\|^3_{L^3(\T^3)}\,dt &\lesssim |h|^{3\theta+(3-\bar\gamma)\frac{r-1}{r}}+|h|^3+|h|^{1+2\theta}\left| \log |h|\right| |h|^{(3-\bar\gamma)\frac{r-1}{r}-(1-\theta)}\\
&\lesssim \left| \log |h|\right| |h|^{3\theta+(3-\bar\gamma)\frac{r-1}{r}}+|h|^3,
\end{align*}
which implies \eqref{cond_B_+} by choosing $\bar \gamma$ sufficiently close to $\gamma$, if $\gamma<3-\frac{r}{r-1}(1-3\theta)$.

If instead, $\kappa>1-\theta$, we can enforce 
$$
3-\frac{r}{r-1}\kappa<\bar\gamma<3-\frac{r}{r-1}\frac{\kappa}{1-\theta}(1-3\theta)
$$
so that, with the choice\footnote{Note that this choice is compatible with the restriction $2|h|<\varepsilon$ since $\frac{1-\theta}{\kappa}<1$. Also, note that this choice of $\varepsilon$ could exceed $\varepsilon_0$, but this can be avoided if $|h|<\frac{\varepsilon_0}{2}$ was chosen sufficiently small depending on $\kappa$ and $\theta$. } $\varepsilon=2|h|^\frac{1-\theta}{\kappa}$, we reduces \eqref{est_ugly} to
\begin{align*}
\int_0^T \|v(\cdot +h,t)-v(\cdot, t)\|^3_{L^3(\T^3)}\,dt &\lesssim |h|^{3\theta+(3-\bar\gamma)\frac{r-1}{r}\frac{1-\theta}{\kappa}}+|h|^3+|h|^{1+2\theta}\left| \log |h|\right| |h|^{\frac{1-\theta}{\kappa}\left((3-\bar\gamma)\frac{r-1}{r}-\kappa)\right)}\\
&\lesssim \left| \log |h|\right| |h|^{3\theta+(3-\bar\gamma)\frac{1-\theta}{\kappa}\frac{r-1}{r}}+|h|^3,
\end{align*}
which again implies \eqref{cond_B_+} by choosing $\bar \gamma$ sufficiently close to $\gamma$, if $\gamma<3-\frac{r}{r-1}\frac{\kappa}{1-\theta}(1-3\theta)$.

\end{proof}


\begin{bibdiv}
\begin{biblist}

\bib{BJPV98}{book}{
   author={Bohr, Tomas},
   author={Jensen, Mogens H.},
   author={Paladin, Giovanni},
   author={Vulpiani, Angelo},
   title={Dynamical systems approach to turbulence},
   series={Cambridge Nonlinear Science Series},
   volume={8},
   publisher={Cambridge University Press, Cambridge},
   date={1998},
   pages={xx+350},
   isbn={0-521-47514-7},
   review={\MR{1643112}},
   doi={10.1017/CBO9780511599972},
}

\bib{BDLIS15}{article}{
   author={Buckmaster, Tristan},
   author={De Lellis, Camillo},
   author={Isett, Philip},
   author={Sz\'{e}kelyhidi, L\'{a}szl\'{o}, Jr.},
   title={Anomalous dissipation for $1/5$-H\"{o}lder Euler flows},
   journal={Ann. of Math. (2)},
   volume={182},
   date={2015},
   number={1},
   pages={127--172},
   issn={0003-486X},
   review={\MR{3374958}},
   doi={10.4007/annals.2015.182.1.3},
}

\bib{BDS16}{article}{
   author={Buckmaster, Tristan},
   author={De Lellis, Camillo},
   author={Sz\'{e}kelyhidi, L\'{a}szl\'{o}, Jr.},
   title={Dissipative Euler flows with Onsager-critical spatial regularity},
   journal={Comm. Pure Appl. Math.},
   volume={69},
   date={2016},
   number={9},
   pages={1613--1670},
   issn={0010-3640},
   review={\MR{3530360}},
   doi={10.1002/cpa.21586},
}

\bib{BDLSV2019}{article}{
   author={Buckmaster, Tristan},
   author={De Lellis, Camillo},
   author={Sz\'{e}kelyhidi, L\'{a}szl\'{o}, Jr.},
   author={Vicol, Vlad},
   title={Onsager's conjecture for admissible weak solutions},
   journal={Comm. Pure Appl. Math.},
   volume={72},
   date={2019},
   number={2},
   pages={229--274},
   issn={0010-3640},
   review={\MR{3896021}},
   doi={10.1002/cpa.21781},
}
	
\bib{BCV19}{article}{
   author={Buckmaster, Tristan},
   author={Colombo, Maria},
   author={Vicol, Vlad},
   title={Wild solutions of the Navier-Stokes equations whose singular sets in time have Hausdorff dimension strictly less than 1},
   journal={	arXiv:1809.00600 [math.AP]},
   date={2019},
}

\bib{CKS97}{article}{
   author={Caflisch, Russel E.},
   author={Klapper, Isaac},
   author={Steele, Gregory},
   title={Remarks on singularities, dimension and energy dissipation for
   ideal hydrodynamics and MHD},
   journal={Comm. Math. Phys.},
   volume={184},
   date={1997},
   number={2},
   pages={443--455},
   issn={0010-3616},
   review={\MR{1462753}},
   doi={10.1007/s002200050067},
}

\bib{CCFS08}{article}{
   author={Cheskidov, A.},
   author={Constantin, P.},
   author={Friedlander, S.},
   author={Shvydkoy, R.},
   title={Energy conservation and Onsager's conjecture for the Euler
   equations},
   journal={Nonlinearity},
   volume={21},
   date={2008},
   number={6},
   pages={1233--1252},
   issn={0951-7715},
   review={\MR{2422377}},
   doi={10.1088/0951-7715/21/6/005},
}

\bib{CL20}{article}{
   author={Cheskidov, Alexey},
   author={Luo, Xiaoyutao},
   title={Sharp nonuniqueness for the Navier-Stokes equations},
   journal={	arXiv:2009.06596 [math.AP]},
   date={2020},
}

\bib{CS14}{article}{
   author={Cheskidov, A.},
   author={Shvydkoy, R.},
   title={Euler equations and turbulence: analytical approach to
   intermittency},
   journal={SIAM J. Math. Anal.},
   volume={46},
   date={2014},
   number={1},
   pages={353--374},
   issn={0036-1410},
   review={\MR{3152734}},
   doi={10.1137/120876447},
}

\bib{CD18}{article}{
   author={Colombo, Maria},
   author={De Rosa, Luigi},
   title={Regularity in time of H\"{o}lder solutions of Euler and
   hypodissipative Navier-Stokes equations},
   journal={SIAM J. Math. Anal.},
   volume={52},
   date={2020},
   number={1},
   pages={221--238},
   issn={0036-1410},
   review={\MR{4051979}},
   doi={10.1137/19M1259900},
}

\bib{CET94}{article}{
   author={Constantin, Peter},
   author={E, Weinan},
   author={Titi, Edriss S.},
   title={Onsager's conjecture on the energy conservation for solutions of
   Euler's equation},
   journal={Comm. Math. Phys.},
   volume={165},
   date={1994},
   number={1},
   pages={207--209},
   issn={0010-3616},
   review={\MR{1298949}},
}

\bib{DS17}{article}{
    author = {Daneri, Sara},
    author ={ Sz\'{e}kelyhidi Jr., L\'{a}szl\'{o}},
    title = {Non-uniqueness and h-principle for {H}\"{o}lder-continuous weak
              solutions of the {E}uler equations},
   journal = {Arch. Ration. Mech. Anal.},
    volume = {224},
      date = {2017},
    number = {2},
     pages = {471--514},
      issn = {0003-9527},
  review = {\MR{3614753}},
       doi = {10.1007/s00205-017-1081-8},
}

\bib{DS2013}{article}{
   author={De Lellis, Camillo},
   author={Sz\'{e}kelyhidi, L\'{a}szl\'{o}, Jr.},
   title={Dissipative continuous Euler flows},
   journal={Invent. Math.},
   volume={193},
   date={2013},
   number={2},
   pages={377--407},
   issn={0020-9910},
   review={\MR{3090182}},
   doi={10.1007/s00222-012-0429-9},
}
		
\bib{DR19}{article}{
   author={De Rosa, Luigi},
   title={Infinitely many Leray-Hopf solutions for the fractional
   Navier-Stokes equations},
   journal={Comm. Partial Differential Equations},
   volume={44},
   date={2019},
   number={4},
   pages={335--365},
   issn={0360-5302},
   review={\MR{3941228}},
   doi={10.1080/03605302.2018.1547745},
}

\bib{DH21}{article}{
   author={De Rosa, Luigi},
    author={Haffter, Silja},
   title={Dimension of the singular set of wild H\"older solutions of the incompressible Euler equations},
   journal={	arXiv:2102.06085 [math.AP]},
   date={2021},
   doi={Preprint},
}

\bib{DT20}{article}{
   author={De Rosa, Luigi},
    author={Tione, Riccardo},
   title={Sharp energy regularity and typicality results for Hölder solutions of incompressible Euler equations},
   journal={Analysis and  PDE},
   date={2020},
   doi={Accepted},
}

\bib{Ey94}{article}{
   author={Eyink, Gregory L.},
   title={Energy dissipation without viscosity in ideal hydrodynamics. I.
   Fourier analysis and local energy transfer},
   journal={Phys. D},
   volume={78},
   date={1994},
   number={3-4},
   pages={222--240},
   issn={0167-2789},
   review={\MR{1302409}},
   doi={10.1016/0167-2789(94)90117-1},
}

\bib{Ey95}{article}{
   author={Eyink, Gregory L.},
   title={Besov spaces and the multifractal hypothesis},
   note={Papers dedicated to the memory of Lars Onsager},
   journal={J. Statist. Phys.},
   volume={78},
   date={1995},
   number={1-2},
   pages={353--375},
   issn={0022-4715},
   review={\MR{1317149}},
   doi={10.1007/BF02183353},
}

\bib{ET99}{article}{
   author={Eyink, Gregory L.},
   author={Thomson, David J.},
   title={Free decay of turbulence and breakdown of self-similarity},
   journal={Phys. Fluids},
   volume={12},
   date={2000},
   number={3},
   pages={477--479},
   issn={1070-6631},
   review={\MR{1743086}},
   doi={10.1063/1.870279},
}

\bib{F95}{article}{
   author={Frisch, U.},
   title={Turbulence: The Legacy of A. N. Kolmogorov},
   journal={Cambridge: Cambridge University Press},
   date={1995},
   doi={10.1017/CBO9781139170666},
}

\bib{FP85}{article}{
   author={Frisch, U.},
   author={Parisi, G.},
   title={On the singularity structure of fully developed turbulence},
   journal={Turbulence and Predictability of Geophysical Flows and Climate Dynamics, (North- Holland, Amsterdam)},
   date={1985},
  pages={84-87},
}

\bib{FSN78}{article}{
   author={Frisch, U.},
   author={Sulem, P.},
      author={Nelkin, M.},
   title={A simple dynamical model of intermittent fully developed turbulence.},
   journal={Journal of Fluid Mechanics},
   volume={87},
   date={1978},
   number={4},
   pages={719-736},
   doi={10.1017/S0022112078001846},
}

\bib{Is2013}{article}{
   author={Isett, Philip},
   title={Regularity in time along the coarse scale flow for the incompressible Euler equations},
   journal={arXiv preprint, https://arxiv.org/abs/1307.0565},
   date={2018},
}

\bib{Isett17}{article}{
   author={Isett, Philip},
   title={On the Endpoint Regularity in {O}nsager's Conjecture},
   journal={arXiv preprint, https://arxiv.org/abs/1706.01549},
   date={2017},
}

\bib{Is2018}{article}{
   author={Isett,Philip},
   title={A proof of Onsager's conjecture},
   journal={Ann. of Math. (2)},
   volume={188},
   date={2018},
   number={3},
   pages={871--963},
   issn={0003-486X},
   review={\MR{3866888}},
   doi={10.4007/annals.2018.188.3.4},
}

\bib{K41}{article}{
   author={Kolmogorov, Andrej Nikolaevič},
   title={The local structure of turbulence in an incompressible viscous fluid},
   journal={Acad. Sci. URSS (N.S.)},
   date={1941},
   number={30},
   pages={301--305},
}

\bib{Mattila}{book}{
    AUTHOR = {Mattila, Pertti},
     TITLE = {Geometry of sets and measures in {E}uclidean spaces},
    SERIES = {Cambridge Studies in Advanced Mathematics},
    VOLUME = {44},
 PUBLISHER = {Cambridge University Press, Cambridge},
      YEAR = {1995},
     PAGES = {xii+343},
      ISBN = {0-521-46576-1; 0-521-65595-1},
   MRCLASS = {28A75 (49Q20)},
  MRNUMBER = {1333890},
MRREVIEWER = {Harold Parks},
       DOI = {10.1017/CBO9780511623813},
       URL = {https://doi.org/10.1017/CBO9780511623813},
}

\bib{Ons}{article}{
   author={Onsager, L.},
   title={Statistical hydrodynamics},
   journal={Nuovo Cimento (9)},
   volume={6},
   date={1949},
   number={Supplemento, 2 (Convegno Internazionale di Meccanica
   Statistica)},
   pages={279--287},
   issn={0029-6341},
   review={\MR{36116}},
}

\bib{PV87}{article}{
   author={Paladin, Giovanni},
   author={Vulpiani, Angelo},
   title={Anomalous scaling laws in multifractal objects},
   journal={Phys. Rep.},
   volume={156},
   date={1987},
   number={4},
   pages={147--225},
   issn={0370-1573},
   review={\MR{919714}},
   doi={10.1016/0370-1573(87)90110-4},
}

\bib{Sch93}{article}{
   author={Scheffer, Vladimir},
   title={An inviscid flow with compact support in space-time},
   journal={J. Geom. Anal.},
   volume={3},
   date={1993},
   number={4},
   pages={343--401},
   issn={1050-6926},
   review={\MR{1231007}},
   doi={10.1007/BF02921318},
}

\bib{Sh00}{article}{
   author={Shnirelman, A.},
   title={Weak solutions with decreasing energy of incompressible Euler
   equations},
   journal={Comm. Math. Phys.},
   volume={210},
   date={2000},
   number={3},
   pages={541--603},
   issn={0010-3616},
   review={\MR{1777341}},
   doi={10.1007/s002200050791},
}

\bib{S09}{article}{
   author={Shvydkoy, Roman},
   title={On the energy of inviscid singular flows},
   journal={J. Math. Anal. Appl.},
   volume={349},
   date={2009},
   number={2},
   pages={583--595},
   issn={0022-247X},
   review={\MR{2456214}},
   doi={10.1016/j.jmaa.2008.09.007},
}

\bib{S81}{article}{
   author={Siggia, E.},
   title={Numerical study of small-scale intermittency in three-dimensional turbulence.},
   journal={Journal of Fluid Mechanics},
   volume={107},
   date={1982},
   pages={375-406},
   doi={10.1017/S002211208100181X},
}

\end{biblist}
\end{bibdiv}

\end{document}